\newcommand{\bd}{{\mathbb{D}}}
\newcommand{\bc}{{\mathbb{C}}}
\newcommand{\bt}{{\mathbb{T}}}
\newcommand{\css}{{\mathcal{S}}}
\newcommand{\s}{\sigma}
\renewcommand{\ss}{\Sigma}
\newcommand{\g}{\gamma}
\renewcommand{\gg}{\Gamma}
\newcommand{\eps}{\varepsilon}
\newcommand{\z}{\zeta}
\newcommand{\noe}{\not\equiv}
\newcommand{\ovl}{\overline}
\numberwithin{equation}{section}
\newtheorem{theorem}{Theorem}[section]
\newtheorem{proposition}[theorem]{Proposition}
\theoremstyle{definition}
\newtheorem{definition}[theorem]{Definition}
\newtheorem{remark}[theorem]{Remark}
\newtheorem{example}[theorem]{Example}
\begin{document}

\title[Darlington synthesis]
{On a local Darlington synthesis problem}
\author[L. Golinskii]{L. Golinskii}

\address{B. Verkin Institute for Low Temperature Physics and
Engineering, 47 Science ave., Kharkov 61103, Ukraine}
\email{golinskii@ilt.kharkov.ua}

\date{\today}

\keywords{Darlington synthesis, pseudocontinuation, inner matrix function, unitary matrix, Nevanlinna, Schur and Smirnov classes}
\subjclass[2010]{30H05, 30H15, 30C80}

\maketitle

\begin{abstract}
The Darlington synthesis problem (in the scalar case) is the problem of embedding a given contractive analytic function to an inner $2\times 2$
matrix function as the entry. A fundamental result of Arov--Douglas--Helton relates this algebraic property to a pure analytic one known as
a pseudocontinuation of bounded type. We suggest a local version of the Darlington synthesis problem and prove a local analog of the ADH theorem.
\end{abstract}

\section*{Introduction}
\label{s0}

The Darlington synthesis with its origin in electrical engineering has a long history. The synthesis of non-lossless circuits was a hard problem at
the pre-computer time. The idea of the Darlington synthesis was to reduce any such problem to a lossless one

A mathematical setup in the simplest scalar case looks as follows, see \cite{ar71, ar73, duhel, ga05} and \cite[Section 6.7]{RS02}.

An analytic function $s$ on the unit disk $\bd$ is called a {\it Schur (contractive)} function, $s\in\css$, if $|s|\le1$ in $\bd$. Similarly, an analytic
on $\bd$ $2\times 2$ matrix function $S$ (throughout this note we deal only with matrices of order $2$) is a Schur (contractive) matrix function,
$S\in\css^{(m)}$, if
$$  I-S^*(z)S(z)\ge 0, \qquad z\in\bd, $$
$I$ is a unity matrix. A function  $s\in\css$ (a matrix function $S\in\css^{(m)}$) is said to be {\it inner (matrix) function} if its boundary values which exist
almost everywhere on the unit circle $\bt$, are unimodular (unitary). Given $s\in\css$, the Darlington synthesis problem asks whether there exists an
inner matrix function $S\in\css^{(m)}$ so that
\begin{equation}
S(z)=\|s_{ij}(z)\|_{i,j=1}^2: \quad s_{22}(z)=s(z).
\end{equation}

A seminal result of Arov \cite{ar71} and Douglas--Helton \cite{duhel} states that a Schur function admits the Darlington synthesis if and only if
it possesses a pseudocontinuation of bounded type across $\bt$. Recall that a meromorphic on a region $\Omega$ function of bounded type
is the quotient of two bounded (or contractive) analytic on $\Omega$ functions
\begin{equation}\label{nev}
f(z)=\frac{f_1(z)}{f_2(z)}\,, \qquad f_j\in\css(\Omega).
\end{equation}
Such functions constitute the Nevanlinna class $N(\Omega)$.

The goal of this note is to suggest a local version of the Darlington synthesis problem and to prove a local analog of the Arov--Douglas--Helton theorem.

\begin{definition}
Let $\g$ be an arc of the unit circle (the case $\g=\bt$ is not excluded). Denote by $\bd_e$ the exterior of the unit disk with respect to the
extended complex plane $\bar\bc$. A function $f\in N(\bd)$ admits the {\it pseudocontinuation of bounded type across $\g$} if there is a function
$\tilde f\in N(\bd_e)$ so that their boundary values agree
\begin{equation}
f(t)=\tilde f(t) \ \ {\rm a.e. \ on} \ \g.
\end{equation}
We write $f\in PC_\g$ for such functions. The class $PC_\g$ is nontrivial, see Example \ref{ex} in Section \ref{s1}.
\end{definition}

\begin{theorem}\label{th1}
Let $s\in\css$. The following conditions are equivalent.
\begin{enumerate}
  \item There is a matrix function $S=\|s_{ij}\|_{i,j=1}^2$ so that $s_{ij}\in\css$, $s_{22}=s$, and $S$ is unitary a.e. on the arc $\g$;
  \item $s\in PC_\g$.
\end{enumerate}
\end{theorem}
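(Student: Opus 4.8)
The plan is to prove the two implications separately, with (1)$\Rightarrow$(2) being a short boundary computation and the synthesis (2)$\Rightarrow$(1) carrying essentially all of the work.

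For (1)$\Rightarrow$(2): assume $S$ has Schur entries and is unitary a.e. on $\g$. The determinant $\det S\in N(\bd)$ (a sum of products of $H^\infty$ functions) is not identically zero, since $|\det S|=1$ on $\g$. On $\g$ one has $S(t)^{-1}=S(t)^*$, and reading the $(2,2)$ entry of $S^{-1}=(\det S)^{-1}\,\mathrm{adj}\,S$ against $(S^*)_{22}=\overline{s_{22}}$ gives $\overline{s(t)}=s_{11}(t)/\det S(t)$, i.e. $s(t)=\overline{s_{11}(t)}/\overline{\det S(t)}$ a.e. on $\g$. Define $\tilde s(z):=\overline{s_{11}(1/\bar z)}\,/\,\overline{\det S(1/\bar z)}$ for $z\in\bd_e$; as a quotient of two functions of bounded type on $\bd_e$ it lies in $N(\bd_e)$ and has boundary values $s$ on $\g$. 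Thus $s\in PC_\g$.

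For (2)$\Rightarrow$(1), I first set up the Schwarz reflection $g_*(z):=\overline{g(1/\bar z)}$, which interchanges $N(\bd)$ and $N(\bd_e)$ and has boundary values $g_*(t)=\overline{g(t)}$. Writing $w:=\tilde s_*\in N(\bd)$ produces an analytic function in the disk with $w=\bar s$ a.e. on $\g$ and $\log^+|w|\in L^1(\bt)$. The first key object is the auxiliary function $h:=1-s\,w\in N(\bd)$: on $\g$ it equals $1-|s|^2\ge0$, and it is not identically zero unless $s$ is inner on $\g$ (a degenerate case handled directly by a diagonal inner matrix). Two consequences are central. Since a nonzero bounded-type function has $\log|h|\in L^1(\bt)$, I obtain the \emph{local Szego condition} $\int_\g\log(1-|s|^2)>-\infty$. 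Moreover $h$ itself lies in $PC_\g$, its pseudocontinuation being the explicit function $\tilde h:=1-\tilde s\,s_*\in N(\bd_e)$, which agrees with $h$ on $\g$; because $h$ is real and nonnegative on $\g$, this pseudocontinuation is just the Schwarz reflection $\tilde h=h_*$.

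With these in hand I assemble the matrix columnwise so that orthonormality on $\g$ becomes automatic. Put $s_{22}=s$ and take $s_{12}=:a$ to be a Schur function with $|a|^2=1-|s|^2$ on $\g$ (local Szego guarantees this modulus is admissible). A direct computation shows that unitarity of $S$ on $\g$ forces $s_{11}=-u\,\bar s$ and $s_{21}=u\,\bar a$ on $\g$ for some unimodular factor $u$; matching the first relation against the analytic function $w$ pins $u=-d$, where $d$ is an outer normalizer with boundary modulus $\min(1,1/|\tilde s|)$ (so $|d|=1$ on $\g$) times an inner factor clearing the poles of $w$. Then $s_{11}=d\,w$ is Schur, its boundary modulus being $\min(|w|,1)\le1$, and the orthonormality identities on $\g$ reduce, as one checks, to the two automatic facts $|a|^2+|s|^2=1$ and $\bar w=s$.

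The main obstacle is the simultaneous construction of $s_{12}=a$ and $s_{21}$. For $s_{21}=-d\,\bar a$ to extend from $\g$ to a Schur function in $\bd$, one needs $\bar a$ to be the boundary trace on $\g$ of a bounded-type function, i.e. $a$ must \emph{itself} admit a pseudocontinuation across $\g$ --- a generic outer function of prescribed modulus does not. The resolution I plan to use is a reflection-compatible factorization of $h$: exploiting that $\tilde h=h_*$ is the explicit pseudocontinuation of $h$ and that $h\ge0$ on $\g$, I factor $h=a\,b$ in $\bd$ with $b$ the reflected pseudocontinuation of $a$, so that $\bar a=b$ on $\g$ and $b\in N(\bd)$; concretely $a$ is a holomorphic square root of $h$, corrected by an inner factor and truncated in modulus to lie in $\css$. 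Carrying out this symmetric square-root factorization of $1-s\tilde s_*$, and checking that the resulting $a$ is contractive with $|a|^2=1-|s|^2$ on $\g$ while $\bar a$ continues analytically, is the crux; once it is in place, $s_{21}=-d\,\bar a$ is Schur and the four entries assemble into a matrix unitary a.e. on $\g$, completing (2)$\Rightarrow$(1).
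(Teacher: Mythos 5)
Your overall architecture coincides with the paper's: the $(1)\Rightarrow(2)$ direction is the same reflection-of-$(S^{-1})^*$ computation, and in $(2)\Rightarrow(1)$ you introduce exactly the paper's auxiliary functions ($w$ is its $g=\ovl{\tilde s(1/\bar z)}$, $h=1-gs$), extract the local Szeg\H{o} condition $\int_\g\log(1-|s|^2)\,dm>-\infty$ from $\log|h|\in L^1(\bt)$, take $s_{12}$ outer with $|s_{12}|^2=1-|s|^2$ on $\g$, and exploit the identity $\ovl{a}=h/a$ a.e.\ on $\g$ (valid because $h\ge0$ and $|h|=|a|^2$ there) to realize $\ovl{a}$ as a boundary trace of a bounded-type function. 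That last identity is precisely the paper's mechanism $h/\s_\g=\ovl{\s_\g}$, so you have correctly located the crux. However, two points in your execution of the crux are genuinely defective as stated. First, ``a holomorphic square root of $h$'' need not exist: the inner part of $h$ may contain Blaschke factors of odd multiplicity. What is actually needed (and what the paper uses) is not a square root of $h$ but the \emph{outer} function $a$ with prescribed modulus $(1-|s|^2)^{1/2}$ on $\g$ and modulus $1$ on $\g'$; the relation $\ovl a=h/a$ on $\g$ then follows from positivity of $h$ there, with no factorization of the inner part of $h$ required.

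Second, and more seriously, your single normalizer $d$ with $|d|=\min(1,1/|w|)$ makes $s_{11}=dw$ contractive but does not control $s_{21}$. The analytic object behind ``$-d\,\ovl a$'' is $-d\,h/a$, and on $\g'$ one only gets $|d|\,|h|/|a|\le\min(1,1/|w|)(1+|w|)/|a|$, which can be as large as $2/|a(t)|\ge2$; the Smirnov maximum principle is then unavailable and the entry need not be Schur. You never verify the bound on $\g'$, and your choice of $d$ and of $|a|$ on $\g'$ does not deliver it. The paper closes this by factoring the outer parts of the numerators and denominators of $g=g_1/g_2$ and $h=h_1/h_2$ into their $\g$- and $\g'$-pieces, and taking the multiplier $p=I_{g_2}I_{h_2}O_{g_2}(\cdot,\g')O_{h_2}(\cdot,\g')$: this is unimodular on $\g$ (so unitarity there is untouched), while on $\g'$ every remaining factor of $s_{11}=-pg$ and $s_{21}=ph/\s_\g$ is either a Schur function or has modulus exactly $1$, and the denominators $O_{g_2}(\cdot,\g)$, $O_{h_2}(\cdot,\g)\s_\g$ are outer, so the Smirnov principle applies cleanly. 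Your route is repairable along exactly these lines (choose $d$ with an additional outer factor damping $|h|/|a|$ on $\g'$), but as written the contractivity of $s_{21}$ is an open gap.
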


In the case $\g=\bt$, the above matrix function $S$ is inner due to the Maximum Norm Principle, and we come to
the Arov--Douglas--Helton theorem.

Given an arc $\g$, we denote by $\css_\g$ ($N_\g$) the class of the Schur (Nevanlinna) functions, unimodular a.e. on $\g$. Similarly, $\css^{(m)}_\g$
stands for the class of the Schur matrix functions unitary a.e. on $\g$.

It is clear that a matrix function with contractive entries does not necessarily belong to $\css^{(m)}$. So the question arises naturally whether
the matrix $S$ in Theorem \ref{th1} can be taken from $\css^{(m)}_\g$. If $s\in\css_\g$, the answer is affirmative: the matrix function
\begin{equation}\label{emb1}
S(z)=\begin{bmatrix}
s_{11}(z) & 0 \\
0 & s(z)
\end{bmatrix}
\end{equation}
with an arbitrary inner function $s_{11}$ belongs to $\css^{(m)}_\g$. But, in general, the answer is negative. The reason is that $s$ being an entry of a
contractive, nondiagonal matrix function is supposed to obey a {\it global} condition
\begin{equation}\label{logint}
\int_\bt \log(1-|s(t)|^2)\,m(dt)>-\infty,
\end{equation}
$m$ is the normalized Lebesgue measure on $\bt$. As it turns out, this condition is also sufficient.

\begin{theorem}\label{th2}
Let $s\in\css'_\g=\css\backslash\css_\g$. The following conditions are equivalent.
\begin{enumerate}
  \item There is a matrix function $V=\|v_{ij}\|_{i,j=1}^2\in\css^{(m)}_\g$ so that $v_{22}=s$;
  \item $s\in PC_\g$ and $\eqref{logint}$ holds.
\end{enumerate}
\end{theorem}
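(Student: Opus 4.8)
The plan is to prove the two implications separately, the implication $(1)\Rightarrow(2)$ being elementary and $(2)\Rightarrow(1)$ carrying the analytic weight. For $(1)\Rightarrow(2)$, suppose $V=\|v_{ij}\|\in\css^{(m)}_\g$ with $v_{22}=s$. Since $V$ is contractive, its entries lie in $\css$, so Theorem \ref{th1} immediately gives $s\in PC_\g$. To obtain \eqref{logint}, I would read off the $(2,2)$ entry of $I-V^*(z)V(z)\ge0$: it yields $1-|s(z)|^2\ge|v_{12}(z)|^2$ throughout $\bd$, hence $1-|s(t)|^2\ge|v_{12}(t)|^2$ a.e. on $\bt$. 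Because $s\in\css'_\g$ is not unimodular a.e. on $\g$, the set $\{t\in\g:|s(t)|<1\}$ has positive measure, and there unitarity of $V$ on $\g$ forces $|v_{12}|^2=1-|s|^2>0$; thus $v_{12}\not\equiv0$. A nonzero Schur function has $\log|v_{12}|\in L^1(\bt)$, so $\int_\bt\log(1-|s|^2)\,m(dt)\ge2\int_\bt\log|v_{12}|\,m(dt)>-\infty$, which is exactly \eqref{logint}.

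For $(2)\Rightarrow(1)$ I would build $V$ column by column. Condition \eqref{logint} lets me define the outer function $a\in\css$ with $|a(t)|^2=1-|s(t)|^2$ a.e. on $\bt$. The first task is to verify that the $2\times1$ column $(a,s)^{\top}$ is inner: it is isometric on $\bt$ by construction, and it is contractive in $\bd$ because each of $|a|^2$ and $|s|^2$ is majorized by the Poisson integral of its boundary values, whence $|a(z)|^2+|s(z)|^2\le P[\,|a|^2+|s|^2\,](z)=P[1](z)=1$ in $\bd$ by the maximum principle. This will be the second column of $V$, so that $v_{22}=s$, $v_{12}=a$, and on all of $\bt$ (a fortiori on $\g$) the second column is a unit vector.

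It remains to manufacture an analytic, contractive first column $(v_{11},v_{21})^{\top}$ which a.e.\ on $\g$ is a unit vector orthogonal to $(a,s)^{\top}$, i.e.\ a unimodular multiple of $(\bar s,-\bar a)^{\top}$. Here $s\in PC_\g$ enters: writing $s_*(z)=\ovl{\tilde s(1/\bar z)}\in N(\bd)$ for the reflection of the pseudocontinuation $\tilde s\in N(\bd_e)$, one has $s_*=\bar s$ a.e.\ on $\g$, so the anti-analytic datum $\bar s$ on $\g$ is realized as a boundary value of a function holomorphic in $\bd$. I would first show that the outer defect $a$ likewise lies in $PC_\g$: since $|a|^2=1-|s|^2=1-s\,s_*$ a.e.\ on $\g$ and the right-hand side is of bounded type there, the outer function carrying the boundary modulus $\sqrt{1-|s|^2}$ inherits a bounded-type pseudocontinuation across $\g$, furnishing $a_*\in N(\bd)$ with $a_*=\bar a$ a.e.\ on $\g$. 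I would then set the first column to be $\lambda\,(s_*,-a_*)^{\top}$ for a scalar Nevanlinna factor $\lambda$, unimodular a.e.\ on $\g$, chosen so that the entries become Schur functions with the prescribed boundary values on $\g$.

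Finally I would assemble $V=\|v_{ij}\|$ and check the three requirements: $v_{22}=s$ by construction; unitarity a.e.\ on $\g$ from $|v_{12}|^2+|s|^2=1$, $|v_{11}|^2+|v_{21}|^2=1$, and the orthogonality $v_{11}\bar a+v_{21}\bar s=0$, all valid a.e.\ on $\g$ by the matched boundary values; and, the crux, contractivity $I-V^*V\ge0$ in $\bd$. The main obstacle is precisely this last point. One cannot simply invoke the maximum principle for the operator norm $\|V(z)\|$, because $V$ is \emph{not} contractive on $\bt\setminus\g$ — the first column, built from the possibly unbounded reflections $s_*,a_*$, is only normalized on $\g$. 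Reconciling \emph{global} contractivity in $\bd$ with the purely \emph{local} unitarity on $\g$, without being forced into unitarity on all of $\bt$ (which may be impossible), is the delicate heart of the argument; it is carried out by choosing the scalar correction $\lambda$ (an outer function with $|\lambda|=1$ on $\g$ and $|\lambda|<1$ off $\g$) so as to shrink the first column exactly enough to restore $\|V\|\le1$ while preserving its boundary behavior on $\g$. This is the single step where both hypotheses in $(2)$, the membership $s\in PC_\g$ and the log-integrability \eqref{logint}, are used together.
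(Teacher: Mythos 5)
Your proof of $(1)\Rightarrow(2)$ is correct and essentially the paper's: read off the $(2,2)$ entry of $I-V^*V\ge0$, conclude $1-|s|^2\ge|v_{12}|^2$ a.e., note $v_{12}\noe0$ because $s\notin\css_\g$, and use $\log|v_{12}|\in L^1(\bt)$.

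The direction $(2)\Rightarrow(1)$, however, has a genuine gap, and the specific frame you set up cannot be completed. You take $v_{12}=a$, the outer function with $|a|^2=1-|s|^2$ a.e.\ on the \emph{whole} circle, so the second column of $V$ is a unit vector a.e.\ on all of $\bt$, not just on $\g$. Positivity of the $2\times2$ matrix $I-V^*(t)V(t)$ then forces its off-diagonal entry to vanish wherever its $(2,2)$ entry $1-|v_{12}|^2-|s|^2$ vanishes; hence a.e.\ on $\g'=\bt\setminus\g$ you are forced to have $\ovl{v_{11}}a+\ovl{v_{21}}s=0$, i.e.\ the first column must be pointwise proportional to $(\bar s,-\bar a)$ a.e.\ on all of $\bt$. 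Your Ansatz $(v_{11},v_{21})=\lambda(s_*,-a_*)$ guarantees this only on $\g$, where $s_*=\bar s$ and $a_*=\bar a$; on $\g'$ it would require $\ovl{s_*}\,a=\ovl{a_*}\,s$ a.e., which amounts to $v_{21}/v_{11}$ admitting a bounded-type pseudocontinuation across all of $\bt$ --- a condition not implied by (2) and generically false. Since $\lambda\noe0$ implies $\lambda\ne0$ a.e., no choice of $\lambda$ escapes this. So the obstruction you correctly flag as ``the delicate heart'' is not merely left unexecuted: with $v_{12}=a$ it is insurmountable. (A secondary issue: a purely \emph{outer} $\lambda$ cannot cancel the inner factors in the denominators of $s_*,a_*\in N(\bd)$, so $\lambda s_*$ need not even lie in the Smirnov class, let alone in $\css$.)

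The paper's remedy is precisely to give up the exact identity $|v_{12}|^2=1-|s|^2$ off $\g$. It takes the entrywise contractive, $\g$-unitary matrix $S$ of Theorem \ref{th1} (whose first column is built with inner--outer factorization and the Smirnov maximum principle, exactly the machinery your $\lambda$ would need) and sets $V=\diag(r,1)\,S\,\diag(r,1)$ with $r=e\s_{\g'}$, where $e$ is outer, $|e|=1$ on $\g$ and $|e|=\eps<1/3$ on $\g'$. Then $|v_{12}|^2=\eps^2(1-|s|^2)$ on $\g'$, the second column has norm strictly below $1$ there, and the resulting slack dominates the off-diagonal cross terms in a direct determinant estimate. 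Any repair of your column-by-column plan must likewise damp $v_{12}$ on $\g'$ rather than take the global spectral factor of $1-|s|^2$.
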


In contrast to the case $\g=\bt$ of the whole unit circle, we have neither the model spaces theory nor the Douglas--Shapiro--Shilds theorem at hand.
So the argument is more or less straightforward and relies upon the explicit (in a sense) expressions for the matrix entries of the matrices in question.

\section{Local pseudocontinuation and Darlington synthesis}
\label{s1}

Let us begin with the classes $\css_\g$ and $N_\g$, which play the same role as the class of inner functions does in the classical setting
of the Darlington synthesis problem.

\begin{example}\label{ex}
Let $a\in N_\g$. Write
\begin{equation}
\tilde a(\z):=\frac1{\ovl{a(1/\bar\z)}}\,, \quad \z\in\bd_e.
\end{equation}
Then $\tilde a\in N(\bd_e)$ and $\tilde a=a$ a.e. on $\g$, so $N_\g\subset PC_\g$. In particular, $\css_\g\subset PC_\g$, and, moreover,
such $s\in PC_\g\backslash PC_\bt$ unless $s$ is an inner function.
\end{example}

\smallskip
\noindent
{\it Proof of Theorem \ref{th1}}.

$(1)\Rightarrow (2)$. The argument here is standard. By the hypothesis, $\det S\noe0$, so we write
\begin{equation}
\begin{split}
U(\z) &:=(S^{-1})^*(1/\bar\z) \\ &=\frac1{\ovl{\det S(1/\bar\z)}}\,
\begin{bmatrix}
\ovl{s(1/\bar\z)} & -\ovl{s_{21}(1/\bar\z)} \\
-\ovl{s_{12}(1/\bar\z)} & \ovl{s_{11}(1/\bar\z)}
\end{bmatrix}, \quad \z\in\bd_e.
\end{split}
\end{equation}
It is clear that all entries of $U$ belong to $N(\bd_e)$, and $U=S$ a.e. on $\g$. Hence, $s$ admits the pseudocontinuation of bounded type across $\g$,
$s\in PC_\g$, with
$$ \tilde s(\z)=\frac{\ovl{s_{11}(1/\bar\z)}}{\ovl{\det S(1/\bar\z)}}\,. $$
Note that in fact each entry of the bounded matrix function $S$, unitary a.e. on $\g$, is in the class $PC_\g$.

$(2)\Rightarrow (1)$. The arguments in Example \ref{ex} and around relation \eqref{emb1} show that the result holds for $s\in\css_\g$.
So we assume further that $s\in\css'_\g=\css\backslash\css_\g$.

Define a pair of functions on $\bd$
\begin{equation}\label{pair}
g(z):=\ovl{\tilde s (1/\bar z)}\in N(\bd), \qquad h(z):=1-g(z)s(z)\in N(\bd),
\end{equation}
where $\tilde s$ is the pseudocontinuation of bounded type of $s$ across $\g$. Now, $s\notin\css_\g$ implies $h\noe0$, so
$\log|h|\in L^1(\bt)$, see \cite[Theorem 2.2]{Du70}, and
\begin{equation}\label{loclog}
\int_\g \log|h(t)|\,m(dt)=\int_\g \log(1-|s(t)|^2)\,m(dt)>-\infty.
\end{equation}
We see that $\log(1-|s|^2)\in L^1(\g)$ as long as $s\in PC_\g\backslash\css_\g$, which is a local counterpart of relation \eqref{logint}.

In view of \eqref{loclog}, the function
\begin{equation}\label{locout}
\s_\g(z):=\exp\Bigl\{\frac12\,\int_{\g} \frac{t+z}{t-z}\,\log(1-|s(t)|^2)\,m(dt)\Bigr\}
\end{equation}
is a well-defined, outer Schur function, $\s_\g\in\css_{out}$, with the boundary values
\begin{equation}\label{bouval1}
|\s_\g(t)|^2=1-|s(t)|^2 \ \ {\rm a.e. \  on} \ \g, \quad |\s_\g(t)|=1\ \ {\rm a.e. \  on} \ \g':=\bt\backslash\g.
\end{equation}
We choose $s_{12}=\s_\g$.

Going back to the Nevanlinna functions $g,h$ in \eqref{pair}, we write
\begin{equation*}
\begin{split}
g(z) &=\frac{g_1(z)}{g_2(z)}=\frac{I_{g_1}(z)O_{g_1}(z)}{I_{g_2}(z)O_{g_2}(z)}\,, \quad g_j\in\css, \\
h(z) &=\frac{h_1(z)}{h_2(z)}=\frac{I_{h_1}(z)O_{h_1}(z)}{I_{h_2}(z)O_{h_2}(z)}\,, \quad h_j\in\css,
\end{split}
\end{equation*}
where $f=I_fO_f$ is the standard inner-outer factorization of a Schur function $f$. We proceed with the further factorization of the outer factors
with respect to $\g$, precisely,
\begin{equation}\label{fact}
\begin{split}
O(z) &=O(z,\g)O(z,\g'), \\
O(z,\gg) &:=\exp\Bigl\{\frac12\,\int_{\gg} \frac{t+z}{t-z}\,\log|O(t)|\,m(dt)\Bigr\}
\end{split}
\end{equation}
for the arc $\gg=\g,\g'$. We have $O(\cdot,\gg)\in\css_{out}$ and
\begin{equation}\label{bouval2}
|O(t,\g)|=1 \ \ {\rm a.e. \  on} \ \g', \quad |O(t,\g')|=1 \ \ {\rm a.e. \  on} \ \g.
\end{equation}
Hence,
\begin{equation}\label{fact1}
\begin{split}
g(z) &=\frac{I_{g_1}(z)O_{g_1}(z,\g)O_{g_1}(z,\g')}{I_{g_2}(z)O_{g_2}(z,\g)O_{g_2}(z,\g')}\,,  \\
h(z) &=\frac{I_{h_1}(z)O_{h_1}(z,\g)O_{h_1}(z,\g')}{I_{h_2}(z)O_{h_2}(z,\g)O_{h_2}(z,\g')}\,.
\end{split}
\end{equation}

Put
\begin{equation}
p(z):=I_{g_2}(z)I_{h_2}(z)\,O_{g_2}(z,\g')O_{h_2}(z,\g'),
\end{equation}
so $|p|=1$ a.e. on $\g$. Our choice of $s_{11}$ and $s_{21}$ is
\begin{equation}\label{1stcol}
\begin{split}
s_{11}(z) &= -p(z)g(z)=-I_{g_1}(z)I_{h_2}(z)\,O_{g_1}(z,\g')O_{h_2}(z,\g')\,\frac{O_{g_1}(z,\g)}{O_{g_2}(z,\g)}\,, \\
s_{21}(z) &=p(z)\,\frac{h(z)}{\s_\g(z)}=I_{g_2}(z)I_{h_1}(z)\,O_{g_2}(z,\g')O_{h_1}(z,\g')\,\frac{O_{h_1}(z,\g)}{O_{h_2}(z,\g)\s_\g(z)}\,.
\end{split}
\end{equation}

It is clear that $s_{22}=s$ and $s_{12}=\s_\g$ \eqref{locout} are contractive functions. As for $s_{11}$ and $s_{21}$ \eqref{1stcol}, we note that
they belong to an important subclass $N^+(\bd)\subset N(\bd)$ of the Nevanlinna class, which is usually referred to as the Smirnov class, see
\cite[Section 2.5]{Du70}. It is characterized by the denominator in \eqref{nev} being an outer Schur function, which is exactly the case in \eqref{1stcol}.
The main feature of this class is the Smirnov maximum modulus principle, \cite[Theorem 2.11]{Du70},
\begin{equation}\label{maxmod}
f\in N^+(\bd), \quad |f(t)|\le1  \ \ {\rm a.e. \  on} \ \bt \ \Rightarrow \ f\in\css.
\end{equation}
For $t\in\g'$ we have $|s_{11}|\le1$, $|s_{21}|\le1$ a.e. in view of \eqref{bouval2}. For $t\in\g$ we have $|p|=1$ a.e., so
\begin{equation*}
|s_{11}(t)|\le |g(t)|=|s(t)|\le1, \quad |s_{21}(t)|\le\frac{|h(t)|}{|\s_\g(t)|}=(1-|s(t)|^2)^{1/2}\le1,
\end{equation*}
and the first claim of the Theorem follows from \eqref{maxmod}.

To show that $S$ is unitary a.e. on $\g$, we put
\begin{equation*}
S^*(t)S(t)=\begin{bmatrix}
|s_{11}(t)|^2+|s_{21}(t)|^2 &  \ovl{s_{11}(t)}s_{12}(t)+\ovl{s_{21}(t)}s(t) \\
s_{11}(t)\ovl{s_{12}(t)}+s_{21}(t)\ovl{s(t)} & |s_{12}(t)|^2+|s(t)|^2
\end{bmatrix}.
\end{equation*}
By \eqref{bouval1},
$$ |s_{12}(t)|^2+|s(t)|^2=|\s_\g(t)|^2+|s(t)|^2=1-|s(t)|^2+|s(t)|^2=1 $$
a.e. on $\g$. Next, $|p|=1$ a.e. on $\g$ implies
$$ |s_{11}(t)|^2+|s_{21}(t)|^2=|g(t)|^2+\frac{|h(t)|^2|}{|\s_\g(t)|^2}=|s(t)|^2 +1-|s(t)|^2=1. $$
Finally, by \eqref{bouval1} and the definition of $h$,
\begin{equation*}
\begin{split}
s_{11}(t)\ovl{s_{12}(t)}+s_{21}(t)\ovl{s(t)} &=p(t)\Bigl(-g(t)\ovl{\s_\g(t)}+\frac{h(t)}{\s_\g(t)}\,\ovl{s(t)}\Bigr) \\
&=  p(t)\ovl{s(t)}\Bigl(-\ovl{\s_\g(t)}+\frac{h(t)}{\s_\g(t)}\Bigr)=0
\end{split}
\end{equation*}
a.e. on $\g$. So, $S^*S=I$, as claimed. The proof is complete.  \hfill  $\Box$

\smallskip
\noindent
{\it Proof of Theorem \ref{th2}}.

$(1)\Rightarrow (2)$. By Theorem \ref{th1}, $s\in PC_\g$, so we have to verify condition \eqref{logint}. Note that at least one of the functions
$v_{12}$, $v_{21}$ is not identically zero (otherwise, $s\in\css_\g$). Assume that  $v_{12}\noe0$ and write
$$ I-V^*(t)V(t)=\begin{bmatrix}
* & * \\
* & 1-|v_{12}(t)|^2-|s(t)|^2
\end{bmatrix}\ge0, \qquad t\in\bt, $$
so $1-|s(t)|^2\ge |v_{12}(t)|^2$. Since $\log |v_{12}|\in L^1(\bt)$, condition \eqref{logint} follows.

$(2)\Rightarrow (1)$. The matrix $V$ arises as an appropriate modification of the matrix $S$ from Theorem \ref{th1}.
By \eqref{logint}, the function
\begin{equation*}
\s_{\g'}(z):=\exp\Bigl\{\frac12\,\int_{\g'} \frac{t+z}{t-z}\,\log(1-|s(t)|^2)\,m(dt)\Bigr\}
\end{equation*}
is well-defined and lies in $\css_{out}$. Denote by $e$ the outer Schur function with
\begin{equation*}
|e(t)|=1 \ \ {\rm a.e. \  on} \ \g, \qquad |e(t)|=\eps \ \ {\rm a.e. \  on} \ \g',
\end{equation*}
where $0<\eps<1/3$ is a small enough positive constant, and put $r:=e\s_{\g'}$.
Take the matrix $V$ in question as
\begin{equation*}
\begin{split}
V(z) &=\begin{bmatrix}
r(z) & 0 \\
0    & 1
\end{bmatrix}  S(z)
\begin{bmatrix}
r(z) & 0 \\
0    & 1
\end{bmatrix}=
\begin{bmatrix}
r^2(z)s_{11}(z) & r(z)s_{12}(z) \\
r(z)s_{21}(z)    & s(z)
\end{bmatrix},
\end{split}
\end{equation*}
As both $e$ and $\s_{\g'}$ are unimodular on $\g$, then so is $r$, and thereby $V$ is unitary a.e. on $\g$.

It remains to check that $V\in\css^{(m)}$. To this end we put on the arc $\g'$
\begin{equation*}
W=
\begin{bmatrix}
w_{11} & w_{12} \\
w_{21} & w_{22}
\end{bmatrix}:=I -V^*V =
\begin{bmatrix}
1-|r^2s_{11}|^2-|rs_{21}|^2 & -\bar r|r|^2s_{12}\ovl{s_{11}}-\ovl{rs_{21}}s \\
-r|r|^2 s_{11}\ovl{s_{12}}-rs_{21}\ovl{s} & 1-|rs_{12}|^2-|s|^2
\end{bmatrix}.
\end{equation*}
Since
$$  r(z)s_{12}(z)=e(z)\s_{\g'}(z)\s_\g(z)=e(z)\,\exp\Bigl\{\frac12\,\int_{\bt} \frac{t+z}{t-z}\,\log(1-|s(t)|^2)\,m(dt)\Bigr\}\,, $$
then $|rs_{12}|^2=\eps^2(1-|s|^2)$ and so
\begin{equation}
w_{22}(t)=1-|r(t)s_{12}(t)|^2-|s(t)|^2=(1-\eps^2)(1-|s(t)|^2)\ge0
\end{equation}
a.e. on $\g'$. Next, the functions $r, s_{11}, s_{21}$ are contractive, so
\begin{equation*}
\begin{split}
w_{11}(t) &=1-|r^2(t)s_{11}(t)|^2-|r(t)s_{21}(t)|^2 =1-|r(t)|^2\bigl(|r(t)s_{11}(t)|^2+|s_{21}(t)|^2\bigr) \\
&=1-\eps^2(1-|s(t)|^2)(|r(t)s_{11}(t)|^2+|s_{21}(t)|^2)\ge 1-2\eps^2(1-|s(t)|^2)
\end{split}
\end{equation*}
and
\begin{equation}
w_{11}(t)\ge 1-2\eps^2(1-|s(t)|^2)>\frac79
\end{equation}
a.e. on $\g'$ for $0<\eps<1/3$. Finally,
$$ -w_{21}(t)=r(t)\bigl(s_{21}(t)\ovl{s(t)}+ |r(t)|^2 s_{11}(t)\ovl{s_{12}(t)}\bigr)=r(t)v(t),\quad |v(t)|\le2, $$
and so
\begin{equation*}
W(t)\ge
\begin{bmatrix}
\frac79 & -\ovl{r(t)v(t)} \\
-r(t)v(t) & (1-\eps^2)(1-|s(t)|^2)
\end{bmatrix}=\widetilde W(t)=\|\tilde w_{ij}(t)\|_{i,j=1}^2
\end{equation*}
a.e. on $\g'$.

To show that $\widetilde W\ge0$ a.e. on $\g'$, given $\tilde w_{11}\ge0$, $\tilde w_{22}\ge0$, we compute the determinant of $\widetilde W$
\begin{equation*}
\begin{split}
\tilde w_{11}(t)\tilde w_{22}(t)-|\tilde w_{12}(t)|^2 &=\frac79\,(1-\eps^2)(1-|s(t)|^2)-|\eps v(t)|^2(1-|s(t)|^2) \\
&\ge\Bigl(\frac79\,(1-\eps^2)-4\eps^2\Bigr)(1-|s(t)|^2)\ge \frac29\,(1-|s(t)|^2)\ge0
\end{split}
\end{equation*}
a.e. on $\g'$ for $0<\eps<1/3$. So, $V\in\css^{(m)}_\g$, as claimed. \hfill $\Box$

\smallskip

We complete this note with some properties of the pseudocontinuation of bounded type across an arc.

\begin{proposition}\label{pr1}
Let $s_1,s_2\in\css$ and $|s_1|=|s_2|$ a.e. on the arc $\g$. Then $s_1$ and $s_2$ belong to $PC_\g$ simultaneously.
\end{proposition}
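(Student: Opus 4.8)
The plan is to funnel the whole statement through the quotient $q:=s_2/s_1$, whose pseudocontinuability will be automatic, and then to recover $s_2$ from $s_1$ by multiplication. First I would clear away the degenerate cases: if $s_1\equiv0$ then $|s_2|=0$ a.e.\ on $\g$, which forces $s_2\equiv0$ (a nonzero Schur function cannot vanish on a subset of $\bt$ of positive measure, since $\log|s_2|\in L^1(\bt)$ by \cite[Theorem 2.2]{Du70}), and both functions pseudocontinue trivially; the symmetric case is identical. Hence I may assume $s_1,s_2\not\equiv0$, so that the same theorem guarantees $s_1(t),s_2(t)\ne0$ for a.e.\ $t\in\g$.

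Next I would record the key \emph{unconditional} observation. As a ratio of two Schur functions, $q=s_2/s_1$ lies in $N(\bd)$ by the definition \eqref{nev}. Since $|s_1(t)|=|s_2(t)|$ and $s_1(t)\ne0$ for a.e.\ $t\in\g$, we have $|q(t)|=1$ a.e.\ on $\g$, i.e.\ $q\in N_\g$. By Example \ref{ex} this already yields $q\in PC_\g$, with the explicit pseudocontinuation $\tilde q(\z)=\frac{1}{\ovl{q(1/\bar\z)}}\in N(\bd_e)$ agreeing with $q$ a.e.\ on $\g$. I stress that this step uses only the hypothesis $|s_1|=|s_2|$ on $\g$ and says nothing yet about $s_1$ or $s_2$ individually.

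Now suppose $s_1\in PC_\g$, with pseudocontinuation $\tilde s_1\in N(\bd_e)$. I would set $\tilde s_2:=\tilde q\,\tilde s_1$. Because $N(\bd_e)$ is closed under multiplication (a product of quotients of contractive functions is again such a quotient), $\tilde s_2\in N(\bd_e)$; and passing to a.e.\ boundary products on $\g$ gives
\[
\tilde s_2=\tilde q\,\tilde s_1=q\,s_1=s_2\qquad\text{a.e.\ on }\g,
\]
so $s_2\in PC_\g$. Exchanging the roles of $s_1$ and $s_2$ (which merely replaces $q$ by $1/q\in N_\g$) yields the reverse implication, and the two functions therefore lie in $PC_\g$ simultaneously. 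The argument is essentially free of obstacles once $q$ is seen to be unimodular on $\g$; the only points I would treat with care are the nonvanishing of the boundary values of $s_1,s_2$ a.e.\ (so that $q$ is a genuine Nevanlinna function with unimodular trace on $\g$) and the factorization of the boundary values of the product $\tilde q\,\tilde s_1$ — both standard features of the Nevanlinna class.
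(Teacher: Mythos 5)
Your proof is correct and follows essentially the same route as the paper: your quotient $q=s_2/s_1$ is exactly the paper's multiplier $a$ (the paper produces it via inner--outer factorization and cancellation of the common factor $O_1(\cdot,\g)=O_2(\cdot,\g)$, whereas you verify $|q|=1$ a.e.\ on $\g$ directly from the hypothesis), and both arguments then conclude via Example \ref{ex} ($N_\g\subset PC_\g$) and the closedness of $PC_\g$ under multiplication. Your explicit handling of the degenerate case $s_1\equiv0$ is a small point of extra care that the paper leaves implicit.
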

\begin{proof}
Let $s_1\in PC_\g$. We have the canonical factorization
$$ s_k(z)=I_k(z)O_k(z,\g)O_k(z,\g'), \qquad k=1,2, $$
and, by the assumption, $O_1(\cdot,\g)=O_2(\cdot,\g)$. Hence,
$$ s_2(z)=a(z)s_1(z), \qquad a(z):=\frac{I_2(z)O_2(z,\g')}{I_1(z)O_1(z,\g')}\,. $$
The function $a\in N_\g$, so, see Example \ref{ex}, $a\in PC_\g$. The later class is closed under multiplication, so $s_2\in PC_\g$,
as claimed.
\end{proof}

\smallskip

Recall that $\s_\g$ is defined in \eqref{locout} under condition \eqref{loclog}.

\begin{proposition}\label{pr2}
Let $s\in\css$ and $\log(1-|s|^2)\in L^1(\g)$. Then
\begin{equation*}
s\in PC_\g \ \Leftrightarrow \ \s_\g\in PC_\g.
\end{equation*}
\end{proposition}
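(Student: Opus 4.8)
The plan is to extract both implications from the complementation mechanism already present in the proof of Theorem \ref{th1}, combined with Proposition \ref{pr1}. The guiding observation is that $s$ and $\s_\g$ sit in the positions $s_{22}$ and $s_{12}$ of one and the same Schur matrix that is unitary a.e. on $\g$, and that — by the remark closing the proof of the implication $(1)\Rightarrow(2)$ of Theorem \ref{th1} — \emph{every} entry of such a matrix lies in $PC_\g$ simultaneously. First I would dispose of the trivial case $s\noe0$: if $s\equiv0$ then $\s_\g\equiv1$ by \eqref{locout}, and both functions are plainly in $PC_\g$. For $s\noe0$ the hypothesis $\log(1-|s|^2)\in L^1(\g)$ forces $|s|<1$ a.e. on $\g$, while $\log|s|\in L^1(\bt)$ gives $|s|>0$ a.e. on $\bt$; hence $0<1-|s|^2<1$ a.e. on $\g$, so by \eqref{bouval1} neither $s$ nor $\s_\g$ belongs to $\css_\g$, and the nontrivial construction of Theorem \ref{th1} applies to each of them.

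For the forward implication I would assume $s\in PC_\g$. Since $s\in PC_\g\backslash\css_\g$, the construction in the proof of $(2)\Rightarrow(1)$ of Theorem \ref{th1} produces a Schur matrix $S=\|s_{ij}\|$, unitary a.e. on $\g$, with $s_{22}=s$ and $s_{12}=\s_\g$ exactly as in \eqref{locout}. By the cited remark each entry of $S$ is in $PC_\g$; in particular $\s_\g=s_{12}\in PC_\g$, which is half of the claim.

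For the reverse implication I would apply Theorem \ref{th1} once more, now with $\s_\g$ playing the role of the Schur function. Because $1-|\s_\g|^2=|s|^2$ a.e. on $\g$ and $\log|s|^2\in L^1(\g)$, the same construction yields a Schur matrix $T$, unitary a.e. on $\g$, with $T_{22}=\s_\g$ and with $(1,2)$-entry an outer Schur function $\tau$ satisfying $|\tau|^2=1-|\s_\g|^2=|s|^2$ a.e. on $\g$. Every entry of $T$ again lies in $PC_\g$, so $\tau\in PC_\g$; since $\tau,s\in\css$ with $|\tau|=|s|$ a.e. on $\g$, Proposition \ref{pr1} transfers membership and delivers $s\in PC_\g$. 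The one step that needs care — rather than a genuine obstacle — is precisely this last transfer: the complementary outer function produced from the input $\s_\g$ is $\tau$, not $s$ itself, because $s$ need not be outer, so one may only identify $s$ with a function of equal modulus on $\g$ and must invoke Proposition \ref{pr1}. Everything else is bookkeeping, checking that the entries manufactured inside Theorem \ref{th1} really have the moduli asserted, which is immediate from \eqref{locout} and \eqref{bouval1}.
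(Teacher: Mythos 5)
Your proposal is correct and follows essentially the same route as the paper: the forward implication reads off $\s_\g$ as the $(1,2)$-entry of the matrix from Theorem \ref{th1} built over $s$, and the reverse implication applies Theorem \ref{th1} to $\s_\g$, notes that the resulting $(1,2)$-entry has modulus $|s|$ a.e. on $\g$, and invokes Proposition \ref{pr1}. The extra bookkeeping you add (the case $s\equiv0$ and the verification that the nondegenerate construction applies) is harmless and slightly more explicit than the paper, which simply uses unitarity a.e. on $\g$ to get $|\s_{12}|=|s|$ there.
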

\begin{proof}
As we mentioned earlier in the proof of Theorem \ref{th1}, each entry of the bounded matrix function $S$, unitary a.e. on $\g$,
is in the class $PC_\g$. If $s\in PC_\g$, the matrix function $S$ in Theorem \ref{th1} contains both $s$ and $\s_\g$ as its entries,
and we are done.

Conversely, let $\s_\g\in PC_\g$. By Theorem \ref{th1}, there is a matrix function $\ss$ with contractive entries, unitary a.e. on $\g$, and
$$ \ss(z)=\begin{bmatrix}
\s_{11}(z) & \s_{12}(z) \\
\s_{21}(z) & \s_\g(z)
\end{bmatrix}. $$
In particular, $|\s_{12}|^2+|\s_\g|^2=1$, and so $|\s_{12}|=|s|$ a.e. on $\g$. The function $\s_{12}$, being the entry of $\ss$,
belongs to the class $PC_\g$. By Proposition \ref{pr1}, so does $s$, as claimed.
\end{proof}

\begin{remark}
The fact that $\g$ is the arc of the unit circle is obviously immaterial. The argument works for an arbitrary Borel set $\g\subset\bt$ of positive
measure.
\end{remark}

\smallskip

{\bf Acknowledgement}. The author thanks the participants of the Analysis Seminar at Kharkov National University for valuable discussions.

\end{document}